\def\C{{\bf \mathbb{C}}}
\def\P{{\bf \mathbb{P}}}
\def \Na{{\mathcal{N}}}
\newtheorem{definition}{Definition}[section]
\newtheorem{ex}[definition]{Example}
\newenvironment{example}{\begin{ex}\em}{\end{ex}}
\newtheorem{Theorem}[definition]{Theorem}
\newtheorem{corollary}[definition]{Corollary}
\newtheorem{proposition}[definition]{Proposition}
\newtheorem{lema}[definition]{Lemma}
\title{On the Nash Modification of a Germ of Complex Analytic Singularity}
\author{Arturo Giles Flores \footnote{Research partially supported by CONACYT (Mexico) grant 221635 }}
\begin{document}

\maketitle

\begin{abstract}
 {\scriptsize For a germ $(X,0) \subset (\C^n,0)$ of reduced, equidimensional complex analytic singularity its Nash modification can be constructed
  as an analytic subvariety $ Z \subset \C^n \times G(k,n)$. We give a characterization of the subvarieties  of $\C^n \times G(k,n)$ that
  are the Nash modification of its image under the projection to $\C^n$. This result generalizes the characterization of conormal varieties
  as Legendrian subvarieties of $\C^n \times \check{\P}^{n-1}$ with its canonical contact structure. As a by-product we define the $d$-conormal
  space of $(X,0)$ for any $d \in \{k, \ldots, n-1\}$ which is a generalization of both the Nash modification and the conormal variety of $(X,0)$. }
 \end{abstract}

\section{Introduction}

   	For a germ of analytic singularity $(X,0)\subset (\C^n,0)$ the set of limits of tangent spaces plays a big role
in the study of equisingularity. If $(X,0)$ is a reduced and irreducible germ of analytic singularity of pure dimension $d$, this set is obtained
as the preimage $\nu^{-1}(0)$ of the Nash modification $\nu:\Na X \to X$. It is then a subvariety of the Grasmannian $G(d,n)$ of $d$-planes 
of $\C^n$ and so has the structure of a projective algebraic variety.\\

      When $X$ is a hypersurface the Grassmannian $G(d,n)$ is a projective space $\check{\P}^{n-1}$ and the 
  set $\nu^{-1}(0)$ can be described via projective duality by a finite family of subcones of the tangent cone $C_{X,0}$,
  which includes all of the irreducible components, known as the aur\'eole of the singularity. \cite[Thm 2.1.1 \& Coro 2.1.3]{L-T2} \\
  
      The generalization of this result to germs of arbitrary codimension needs to replace the Nash modification $\Na X$ by
   the conormal space $C(X)$.  Recall that the conormal space of $X$ in $\C^n$ is an analytic space $C(X) \subset X \times \check{\P}^{n-1}$,
  together with a proper analytic map $\kappa: C(X)\to X$, where the fiber over a smooth point  
  $x \in X$ is the set of tangent hyperplanes to $X$ at $x$, that is the hyperplanes $H \in \check{\P}^{n-1}$ 
  containing the direction of the tangent space $T_xX$. We are then able to once again describe the set of limits of tangent
  hyperplanes  via the aur\'eole and projective duality. See proposition  \cite[pg. 378-381]{Te1} \\ 
  
    What is so useful about this change from tangent spaces to tangent hyperplanes is that eventhough the space $C(X)$ depends on the embedding
  there is a ``numerical" characterization ( in terms of the dimension of a fiber) of the Whitney conditions via the normal/conormal diagram \cite[Chapter 5, Thm 1.2]{Te1} 
  and in theory it is possible to recover the fiber of  the Nash modification (which does not depend on the embedding) from the conormal fiber.\\ 
  
  	The idea is that every limit of tangent hyperplanes $H \in \kappa^{-1}(0)$ contains a limit of tangent spaces $T \in \nu^{-1}(0)$, and so to each such 
$T$ there corresponds, via projective duality, a linear subspace $\check{\P}^{n-d-1} \subset \kappa^{-1}(0) \subset \check{\P}^{n-1}$. This means we 
have to look for linear subspaces of the right dimension contained in the conormal fiber and take their projective duals.\\

	The problem is that not every $T$ obtained this way is a limit of tangent spaces, and it is a simple dimensionality question. Take for instance 
a germ of surface $(S,0)\subset (\C^5,0)$ with an exceptional tangent. According to what we just said each limit of tangent planes $T$ corresponds
to a $\check{\P}^2\subset \kappa^{-1}(0)\subset \check{\P}^4$.

	But the existence of the exceptional tangent tells us that the projective dual of this point of $\P^4$ is contained
in $\kappa^{-1}(0)$. Its projective dual is a $\check{\P}^3$, and so inside it we have a $G(2,3)$ (dimension 2) of possible limits 
of tangent spaces. But they can't all be limits of tangent spaces because we know that the dimesion of $\nu^{-1}(0)$ is at most 1!!!!!!!
And even in a simple case like this we do not know how to distinguish the ones that are limits of tangent spaces from the ones that are not.
\emph{More generally we do not know the size of the contribution of an exceptional cone to the Nash fiber.}\\	

	One of the key results that made working with the conormal easier than with the Nash modification is that conormal varieties can 
be characterized as Legendrian subvarieties of projectivized cotangent spaces with their canonical contact structure.  In this spirit 
we try to characterize analytic subvarieties $Z$ of $\C^n \times G(d,n)$ such that:
\begin{enumerate}
\item $Z$ has dimension $d$.
\item Its image (by the projection) $X$ in $\C^n$ has dimension $d$.
\item $Z$ is the Nash modification of $X$ 
\end{enumerate}
    
     In order to do this we define an analytic $k$-plane distribution on $\C^n \times G(d,n)$ locally defined by a system of analytic forms 
   and look at the corresponding integral subvarieties. Even though we want to find subvarieties $Z$ of dimension $d$, there are subvarieties of 
   dimension greater than $d$ that are compatible with the distribution in the sense that  for every smooth point $p \in Z$ we have that
   the tangent space $T_pZ$ is contained in the corresponding $k$-plane $\mathcal{H}_p$ determined by the distribution. \\
   
      However, if $X\subset \C^n$ is of dimension $k\leq d$ then we can define an analytic subvariety of $\C^n \times G(d,n)$, that generalizes both 
    the Nash modification $\Na X$ and the conormal space $C(X)$ via the limits of tangent $d$-planes. Zak works with this kind of spaces in 
    his book \cite{Zak} but only in the case of projective varieties and calls them higher order Gauss maps. \\
    
    	\section{The $k$-plane distribution on  $\C^n \times G(d,n)$}	

     		Let us first recall that one of the ways of defining analytic charts for the Grassmannian $G(d,n)$ is to view its points as 
     graphs of linear maps defined on a fixed $d$-dimensional subspace of $\C^n$ and taking values in another fixed $(n-d)$-subspace
     of $\C^n$, where these two fixed subspaces are transversal. This is done as follows. 
     		
		Fix a point $W_0 \in G(d,n)$ and a $n-d$ linear subspace $W_1 \subset \C^n$ such that 
		\[\C^n= W_0 \oplus W_1\] 
	For every linear map $L \in \mathrm{Hom}_{\C}(W_0,W_1)$ we have that its graph in $W_0 \times W_1= \C^n$ 
	is a linear subspace $W$ of dimension $d$, that is, a point in $G(d,n)$.  Moreover, we have that $W \in G(d,n)$ is the graph
	of one such linear map $L$ if and only if $W$ is transversal to $W_1$. \\
	
	  Consider the open subset of the Grasmannian
	  \[G_d^0(n,W_1):=\{W \in G(d,n) \, | \, W \pitchfork W_1\}\]
	  and note that it contains $W_0$. If we denote by $\pi_j$ the linear projection from $\C^n$ to $W_j$ then 
	  we have a bijection
	  \begin{align*}
	     \Phi_{W_0,W_1}:G_d^0(n,W_1) & \longrightarrow \mathrm{Hom}_{\C}(W_0,W_1) \\
	            W & \longmapsto L:=\pi_1\circ (\pi_0|_W)^{-1}:W_0 \to W_1 
	  \end{align*}
          Indeed, for every $W \in G_d^0(n,W_1)$ the restriction map $\pi_0|_W:W \to W_0$ is a linear isomorphism
          and the $L$ thus defined has $W$ as its graph. The collection of the charts $\Phi_{W_0,W_1}$ , when $(W_0,W_1)$
          runs over the set of all direct sum decompositions of $\C^n$, with $W_0$ of dimension d, is an analytic atlas for $G(d,n)$.
          Note that to cover $G(d,n)$ it is enough to consider the charts corresponding to all the coordinate $d-$planes with their
          corresponding complementary coordinate $(n-d)-$planes. (See \cite{Pic}) \\
          
           	To better understand the construction of the $k$-plane distribution on $\C^n \times G(d,n)$ let us first recall the canonical contact 
	structure on the projectivized cotangent bundle $\P T^*\C^n = \C^n \times \check{\P}^{n-1}$ with coordinate system $(x_1,\ldots,x_n), [a_1:\cdots:a_n]$.
	If we look at the chart $\varphi_1: U_1 \to \C^{2n-1}$ where $a_1 \neq 0$ 
	 \[(x_1,\ldots,x_n),[a_1:\cdots:a_n] \mapsto \left(x_1,\ldots,x_n,\frac{a_2}{a_1},\ldots, \frac{a_n}{a_1}\right) \]
	 then the hyperplane of the tangent space $T_{\vec{x},[a]}\P T^*\C^n $ chosen by this distribution is given by the kernel of the 1-form
	 \[ dx_1+ \frac{a_2}{a_1}dx_2+ \cdots + \frac{a_n}{a_1}dx_n \; (*)\]
	 But if we identify the tangent space $T_{\vec{x},[a]}\P T^*\C^n $ with the product of tangent spaces $T_{\vec{x}}\C^n \times T_{[a]}\check{\P}^{n-1}$
	 then the kernel $H_{\vec{x},[a]}$ of $(*)$ is identified with $\widetilde{H} \times T_{[a]}\check{\P}^{n-1}$ where $\widetilde{H} \subset \C^n$
	 is the hyperplane determined by the point $[a] \in \check{\P}^{n-1}$.
	 
	 \begin{definition}
	    On the $n+d(n-d)$ dimensional analytic manifold $\C^n \times G(d,n)$ we define a $d+d(n-d)$-plane distribution as follows.
	 Let $(\vec{z},W)$  be a point $ \C^n \times G(d,n)$ and identify its tangent space with the product of tangent spaces $T_{\vec{z}}\C^n \times T_WG(d,n)= \C^n \times T_WG(d,n)$. Then the plane given by the distribution at this point is:
	 \[\mathcal{H}(\vec{z},W):= W \times T_WG(d,n)\]
	 \end{definition}

   \begin{proposition} The distribution $\mathcal{H}$ is locally defined by the kernel of a system of analytic 1-forms of $ \C^n \times G(d,n)$.
   \end{proposition}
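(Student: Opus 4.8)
The plan is to work chart by chart on $G(d,n)$ using the atlas $\{\Phi_{W_0,W_1}\}$ recalled above, and to exhibit in each chart an explicit system of $n-d$ analytic $1$-forms whose common kernel is $\mathcal{H}$. First I would fix a direct sum decomposition $\C^n = W_0 \oplus W_1$ with $W_0$ a coordinate $d$-plane and $W_1$ its complementary coordinate $(n-d)$-plane, and split the linear coordinates of $\C^n$ accordingly as $(\vec{x},\vec{y}) = (x_1,\ldots,x_d,y_1,\ldots,y_{n-d})$, the $x_j$ being coordinates on $W_0$ and the $y_i$ on $W_1$. On the chart domain $\C^n\times G_d^0(n,W_1)$ the Grassmannian factor is parametrized, via $\Phi_{W_0,W_1}$, by the $(n-d)\times d$ matrix $(a_{ij})$ of the linear map $L\colon W_0\to W_1$, and the plane $W=\Phi_{W_0,W_1}^{-1}(L)$ is, as a linear subspace of $\C^n=T_{\vec{z}}\C^n$, precisely the common zero locus of the linear forms $y_i - \sum_{j=1}^{d} a_{ij}x_j$, $i=1,\ldots,n-d$.

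This suggests considering on $\C^n\times G_d^0(n,W_1)$ the $1$-forms
\[\omega_i \;=\; dy_i - \sum_{j=1}^{d} a_{ij}\,dx_j,\qquad i=1,\ldots,n-d,\]
which are visibly analytic (polynomial in the chart coordinates) and pointwise linearly independent, since each $\omega_i$ contains the term $dy_i$ and no $\omega_{i'}$ with $i'\neq i$ does; hence their common kernel has the expected dimension $d+d(n-d)$. The key observation, which I would verify next, is that none of the $\omega_i$ involves the differentials $da_{ij}$. Therefore, under the identification $T_{(\vec{z},W)}\big(\C^n\times G(d,n)\big)=T_{\vec{z}}\C^n\times T_WG(d,n)=\C^n\times T_WG(d,n)$, the forms $\omega_i$ are pulled back from the $\C^n$ factor: the factor $T_WG(d,n)$ lies in $\bigcap_i\ker\omega_i$ automatically, while on the $\C^n$ factor the common kernel of the $\omega_i$ at the point with Grassmann coordinates $(a_{ij})$ is exactly the set of vectors $(v_x,v_y)$ with $v_{y,i}=\sum_j a_{ij}v_{x,j}$, i.e. the plane $W$ itself. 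Combining these, $\bigcap_{i=1}^{n-d}\ker\omega_i = W\times T_WG(d,n) = \mathcal{H}(\vec{z},W)$ at every point of the chart, which is the desired local description.

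Finally I would remark that since the decompositions with $W_0$ a coordinate $d$-plane already suffice to cover $G(d,n)$, the products $\C^n\times G_d^0(n,W_1)$ cover $\C^n\times G(d,n)$, so the local systems $\{\omega_i\}$ together furnish the required local definition of $\mathcal{H}$; in the hypersurface case $d=n-1$ one recovers, up to rescaling, the single contact form $(*)$. I do not expect a real obstacle here beyond bookkeeping: the one point that needs care is keeping the identification of $T_{(\vec{z},W)}(\C^n\times G(d,n))$ with $\C^n\times T_WG(d,n)$ consistent with the chart $\Phi_{W_0,W_1}$ and with the definition of $\mathcal{H}$, so that the statement ``$\omega_i$ does not involve $da_{ij}$'' genuinely translates into ``$T_WG(d,n)$ is contained in the kernel''. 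Everything else is a direct linear-algebra verification.
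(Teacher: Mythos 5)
Your proposal is correct and follows essentially the same route as the paper: it works in the charts $\C^n\times G_d^0(n,W_1)$ with $W_0,W_1$ coordinate planes and exhibits the system $dz_{d+i}-\sum_j a_{ij}\,dz_j$ (the paper's matrix of $1$-forms), whose common kernel is $W\times T_WG(d,n)=\mathcal{H}(\vec z,W)$. Your additional checks (pointwise independence, absence of the $da_{ij}$, and covering by coordinate charts) just make explicit what the paper leaves implicit.
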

   \begin{proof}
      Recall that it is enough to consider charts of the form $\C^n \times G_d^0(n,W_1)$ where $W_1$ is a coordinate (n-d)-plane of $\C^n$,
    and $W_0$ the corresponding ``complementary'' coordinate $d-plane$. To simplify notation and without loss of generality we will assume 
    $W_0=\left< \vec{e_1}, \ldots, \vec{e_d}\right>$ and $W_1= \left< \vec{e_{d+1}}, \ldots, \vec{e_n}\right>$.
    
    	Now from the Grassmannian chart 
	 \begin{align*}
	     \Phi_{W_0,W_1}:G_d^0(n,W_1) & \longrightarrow \mathrm{Hom}_{\C}(W_0,W_1) \\
	            W & \longmapsto L:=\pi_1\circ (\pi_0|_W)^{-1}:W_0 \to W_1 
	  \end{align*}
	  and after identifying each linear map $L \in \mathrm{Hom}_{\C}(W_0,W_1)$ with the corres-ponding $(n-d)\times d$ matrix 
	  with respect to the basis previously established we obtain the chart of $\C^n \times G(d,n)$ given by:
	  \begin{align*}
	     \Psi_{W_0,W_1}:\C^n \times G_d^0(n,W_1) & \longrightarrow \C^n \times \C^{d(n-d)} \\
	            (z_1,\ldots,z_n), W & \longmapsto (z_1,\ldots,z_n,a_{ij}), \, i=1,\ldots,n-d; \, j=1,\ldots, d
	  \end{align*}
 	  where $W=\left<\vec{e_1} + L(\vec{e_1}), \ldots, \vec{e_d} + L(\vec{e_d})\right> $ is the graph of the corresponding linear map 
	  $L= \Phi_{W_0,W_1}(W) \in \mathrm{Hom}_{\C}(W_0,W_1)$.\\
	  
	  In this chart we can define the following system of analytic 1-forms:
	  \[  \left( \begin{array}{c}  dz_{d+1} \\ dz_{d+2} \\ \vdots \\ dz_n \end{array} \right) = 
     \left( \begin{array}{ccc}  a_{11} & \cdots & a_{1d} \\
                                        a_{21} & \cdots & a_{2d} \\ \vdots & \vdots & \vdots \\
                                        a_{(n-d)1} & \cdots & a_{(n-d)d} \end{array} \right) 
          \left( \begin{array}{c}  dz_1 \\ dz_2 \\ \vdots \\ dz_d \end{array} \right) \]
          whose kernel at a point $(z_1,\ldots,z_n),W \in \C^n\times G(d,n)$ is 
           \[\mathcal{H}(\vec{z},W)=W \times T_WG(d,n) \subset \C^n \times T_WG(d,n)=T_{\overline{z},W}(\C^n\times G(d,n))\]
   \end{proof}
	 
\section{Integral Subvarieties}

	Once we defined the $k-plane$ distribution the next step is to characterize, or find  the corresponding integral subvarieties.
	
\begin{definition}
      The analytic subvariety $Z \subset \C^n \times G(d,n)$ is an integral subvariety of $\left(\C^n \times G(d,n), \mathcal{H}\right)$ if
    for every smooth point $(\vec{z},W) \in Z$ we have that $T_{\vec{z},W}Z \subset \mathcal{H}(\vec{z},W) $.
\end{definition}

     The definition of the distribution puts a restriction on both the dimension of the integral subvariety $Z$ and the dimension of its projection
     on $\C^n$. 
     
 \begin{proposition}\label{Cotadimensional}
        Let $\pi: \C^n \times G(d,n) \to \C^n$ be the projection onto $\C^n$. If $Z \subset \C^n \times G(d,n)$ is an integral subvariety of 
        $\left(\C^n \times G(d,n), \mathcal{H}\right)$ then $t:=\mathrm{dim}\, \pi(Z) \leq d$ and $\mathrm{dim}\, Z \leq t + (d-t)(n-d)$.
 \end{proposition}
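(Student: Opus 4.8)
The plan is to read the infinitesimal constraint $T_{(\vec z,W)}Z\subset\mathcal H(\vec z,W)=W\times T_WG(d,n)$ through the projection $\pi$ to obtain $t\le d$, and then to feed this back into a fibre--dimension count to bound $\dim Z$. Since $G(d,n)$ is projective, $\pi$ is proper, so $X:=\pi(Z)$ is a closed analytic subvariety of $\C^n$; I would first reduce to the case in which $Z$ is irreducible, so that $X$ is irreducible of dimension $t$ and $\pi|_Z:Z\to X$ is dominant.

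For the bound $t\le d$ I would pick a point $p=(\vec z_0,W)$ in the dense open subset of $Z$ consisting of smooth points that lie over a smooth point $\vec z_0$ of $X$ and at which $\pi|_Z$ has maximal rank $t$; at such a point the differential of $\pi|_Z$ maps $T_pZ$ onto $T_{\vec z_0}X$. Under the identification $T_p(\C^n\times G(d,n))=\C^n\times T_WG(d,n)$ the map $d\pi_p$ is the projection onto the first factor, so it carries $\mathcal H(\vec z_0,W)=W\times T_WG(d,n)$ onto $W$. Combining this with $T_pZ\subset\mathcal H(\vec z_0,W)$ gives $T_{\vec z_0}X=d\pi_p(T_pZ)\subset W$, hence $t=\dim T_{\vec z_0}X\le\dim W=d$. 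This same computation delivers the geometric fact needed next: \emph{at a generic point $(\vec z_0,W)$ of $Z$ the $d$-plane $W$ contains the $t$-plane $T:=T_{\vec z_0}X$.}

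For the bound on $\dim Z$ I would fix a generic $\vec z_0\in X$ and consider the fibre $F:=(\pi|_Z)^{-1}(\vec z_0)$ as a subvariety of $\{\vec z_0\}\times G(d,n)\cong G(d,n)$. By the theorem on the dimension of the fibres of a dominant morphism, $\dim Z=t+\dim F$, so it is enough to show $\dim F\le(d-t)(n-d)$. By the previous paragraph, $W\supseteq T$ for every $W$ in a dense subset of $F$; since $\{W\in G(d,n)\mid W\supseteq T\}$ is closed, the top-dimensional components of $F$ lie inside it. But that incidence locus is isomorphic to $G(d-t,n-t)$ via $W\mapsto W/T\subset\C^n/T$, so it has dimension $(d-t)\bigl((n-t)-(d-t)\bigr)=(d-t)(n-d)$. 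Therefore $\dim F\le(d-t)(n-d)$ and $\dim Z=t+\dim F\le t+(d-t)(n-d)$.

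The step I expect to be the main obstacle is the genericity bookkeeping in the last paragraph: one must make sure that, for a generic $\vec z_0\in X$, the generic point of the fibre $F$ is indeed a smooth point of $Z$ at which $\pi|_Z$ has maximal rank, so that the containment $T\subset W$ --- which a priori holds only on the good open locus of $Z$ --- propagates to all the top-dimensional components of $F$ by closedness. This amounts to the routine estimate that the bad locus $B\subsetneq Z$ (singular points of $Z$, together with the points where $\pi|_Z$ drops rank) meets the generic fibre $F$ in a set of dimension strictly less than $\dim F$; the rest of the argument is formal.
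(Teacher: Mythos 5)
Your argument is essentially the paper's own proof: both use the integrality condition together with generic submersivity of $\pi|_Z$ to get $T_p\pi(Z)\subset W$ at a general point (hence $t\le d$), and then bound the generic fibre by the incidence variety $\{W\in G(d,n)\mid W\supset T_p\pi(Z)\}\cong G(d-t,n-t)$ of dimension $(d-t)(n-d)$, so that $\dim Z\le t+(d-t)(n-d)$. The only difference is that you make explicit the closure/genericity bookkeeping needed to put the whole (top-dimensional part of the) generic fibre inside the incidence locus, a point the paper's proof leaves implicit.
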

 \begin{proof}
    Just by looking at the definition of integral subvariety we have that $T_{p,W}Z \subset \mathcal{H}(p,W) $ and this implies
  that $\mathrm{dim}\, Z \leq d+d(n-d)$.  Since $\pi$ is a proper map $\pi(Z)\subset \C^n$ is an analytic subvariety, and the restriction 
  $\pi:Z \to \pi(Z)$ is generically submersive. Then, for any (sufficiently general) point $(p,W) \in Z^0$ with smooth image $p \in \pi(Z^0)$  we have
  that
  \[T_p\pi(Z) \subset D_p\pi(\mathcal{H}(p,W))=W\]
  therefore $t:=\mathrm{dim}\, \pi(Z) \leq d$.
 
 	In order to bound the dimension of $Z$ we are going to calculate a bound for the dimension of the fiber $\pi^{-1}(p)$ for a generic 
  point $p \in \pi(Z)$.  For a sufficiently general smooth point $p \in \pi(Z)^0$ we have that
  \[\pi^{-1}(p) \subset \{p\} \times \{W \in G(d,n) \, | \, W \supset T_p\pi(Z)\} \] 
  If $\pi(Z)$ is of dimension $t$ then by choosing any (linear) direct sum decomposition of $\C^n= E^{n-t} \bigoplus T_p\pi(Z)$
 we get a 1 to 1 correspondence between the set $ \{W \in G(d,n) \, | \, W \supset T_p\pi(Z)\} $ and the set of $d-t$ linear 
 subspaces of $E^{n-t}$, i.e. a Grassmanian $G(d-t,n-t)$ of dimension $(d-t)(n-d)$. 
 Therefore $\mathrm{dim}\, Z \leq t + (d-t)(n-d)$.
  \end{proof}

	In the proof of this result we have seen that the fiber over a non-singular point $p  \in \pi(Z^0)$ is contained in the set of tangent $d-$planes to $\pi(Z)$ at 
	$p$, that is $d-$dimensional linear subspaces $W$ of $\C^n$  such that $W \supset T_p\pi(Z)$. This means, we are looking at a natural generalization of both
	the Nash mo-dification and the conormal space of a germ of singularity $(X,0) \subset (\C^n,0)$ where we consider limiting d-dimensional linear tangent spaces 
	for any $d$ in $\{\mathrm{dim}\,X, \ldots, n-1\}$. Zak considers these spaces in \cite{Zak} in the case of projective varieties and subvarieties of complex tori.

\section{Characterization of $C_d(X)$ inside $\C^n \times G(d,n)$}	

\begin{definition}\label{Conormaldef}
   Let $(X,0) \subset (\C^n,0)$ be a germ of analytic,reduced and irreducible analytic singularity of dimension $k$. For any $d \in \{k,k+1,\ldots, n-1\}$
   define the $d-conormal$ of $X$ by
   \[C_d(X):= \overline{\{ (z,W)  \in X^0 \times G(d,n) \, | \,  T_zX^0 \subset W\}}\]
   where $X^0$ denotes the smooth part of $X$, $G(d,n)$ is the Grassmann variety of $d-$dimensional linear subspaces of $\C^n$ and the bar 
   denotes closure in $X \times G(d,n)$.  
      We will denote by $\nu_d:C_d(X) \to X$ the restriction of the projection to the first coordinate. 
\end{definition}

   Note that for $d=k$ we have that $C_k(X)$ is the Nash modification of $X$ and for $d=n-1$ we recover the usual conormal space of $X$.
   
   \begin{lema} \label{ConormalIntegral}In the setting of definition \ref{Conormaldef} we have that $C_d(X)$  is an analytic space of 
       dimension $k + (d-k)(n-d)$ and $\nu_d: C_d(X) \to X$ is a proper map. Moreover it is an integral subvariety of $\left(\C^n \times G(d,n), \mathcal{H}\right)$.
   \end{lema}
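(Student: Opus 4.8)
The plan is to realize $C_d(X)$ explicitly as the image of a proper map out of a Grassmann bundle over the Nash modification $\Na X$, and then to read off all three assertions from that model together with the local structure over $X^0$. First I would analyze the locally closed piece $V:=C_d(X)\cap(X^0\times G(d,n))=\{(z,W)\in X^0\times G(d,n)\mid T_zX^0\subset W\}$. Exactly as in the proof of Proposition \ref{Cotadimensional}, over each $z\in X^0$ the set $\{W\in G(d,n)\mid W\supset T_zX^0\}$ is, after choosing a complement of $T_zX^0$ in $\C^n$, identified with $G(d-k,n-k)$; letting $z$ vary and using that the Gauss map $z\mapsto T_zX^0$ is analytic on $X^0$, this presents $V\to X^0$ as a locally trivial analytic fibre bundle with fibre $G(d-k,n-k)$. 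Hence $V$ is smooth, and since $X$ (so $X^0$) is irreducible and the fibre is irreducible, $V$ is irreducible of dimension $k+\dim G(d-k,n-k)=k+(d-k)(n-d)$. As $C_d(X)=\overline V$ by definition, once we know it is analytic it is irreducible of dimension $\dim V$.

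To obtain analyticity of $C_d(X)$ I would exhibit it as a proper image. On $\nu:\Na X\to X$ let $\mathcal{T}$ be the rank-$k$ tautological subbundle pulled back from $G(k,n)$ and $\mathcal{Q}=\C^n/\mathcal{T}$ the rank-$(n-k)$ quotient bundle; form the relative Grassmann bundle $p:\mathbf{Gr}(d-k,\mathcal{Q})\to\Na X$, with fibre $G(d-k,n-k)$, and define $\Theta:\mathbf{Gr}(d-k,\mathcal{Q})\to X\times G(d,n)$ by sending a point over $(z,T)\in\Na X$ corresponding to a $(d-k)$-plane $\overline W\subset\C^n/T$ to $\bigl(z,\rho_T^{-1}(\overline W)\bigr)$, where $\rho_T:\C^n\to\C^n/T$ is the quotient. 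This $\Theta$ is holomorphic and is proper, since $\nu\circ p$ is proper and $G(d,n)$ is compact. Over $X^0$, where $T$ must equal $T_zX^0$, it restricts to an isomorphism onto $V$; since the part of $\mathbf{Gr}(d-k,\mathcal{Q})$ over $X^0$ is dense and maps into $V$, the image $\Theta(\mathbf{Gr}(d-k,\mathcal{Q}))$ lies in $\overline V$, and being closed (Remmert's proper mapping theorem) and containing $V$ it equals $\overline V=C_d(X)$. Thus $C_d(X)$ is analytic of dimension $k+(d-k)(n-d)$; and $\nu_d$ is proper, being the restriction to the closed set $C_d(X)$ of the projection $X\times G(d,n)\to X$, which is proper because $G(d,n)$ is compact.

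For the integral condition $T_{(z,W)}C_d(X)\subset\mathcal{H}(z,W)$ at smooth points, note that ``a tangent plane of $C_d(X)$ is contained in $\mathcal{H}$'' is a closed incidence condition on the Grassmann bundle of $T(\C^n\times G(d,n))$, so the set of points of $C_d(X)^0$ where it holds is closed in $C_d(X)^0$; hence it suffices to verify it on the dense open smooth subset $V\subset C_d(X)$. For $(z,W)\in V$ the projection $\pi:C_d(X)\to\C^n$ restricts to the bundle submersion $V\to X^0$, so $D\pi$ maps $T_{(z,W)}V$ onto $T_zX^0\subset W$, i.e. the $\C^n$-component of every tangent vector of $V$ at $(z,W)$ lies in $W$; since $\mathcal{H}(z,W)=W\times T_WG(d,n)$ imposes no condition on the $G(d,n)$-component, we get $T_{(z,W)}V\subset\mathcal{H}(z,W)$, and by density the inclusion holds on all of $C_d(X)^0$.

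I expect the only genuinely delicate step to be the \emph{global} analyticity of $C_d(X)$: a priori $V$ is known to be analytic only over the smooth locus $X^0$, and a direct Remmert--Stein argument fails because $\mathrm{Sing}(X)\times G(d,n)$ is not of small enough dimension. The relative-Grassmann-bundle-over-$\Na X$ model together with Remmert's proper mapping theorem is precisely what controls the closure over the singular locus; once that is in place, the dimension count, properness of $\nu_d$, and the integral property all follow from the fibre-bundle structure of $V$ over $X^0$.
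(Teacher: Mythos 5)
Your proposal is correct, and its skeleton (fibre $G(d-k,n-k)$ over smooth points giving the dimension $k+(d-k)(n-d)$, properness of $\nu_d$ from compactness of $G(d,n)$, and verification of the integral condition via the projection of tangent vectors) coincides with the paper's argument. Where you genuinely diverge is in the two places the paper is terse. For analyticity the paper simply says it follows because $X$ is analytic and the incidence condition $T_zX^0\subset W$ is algebraic, while you build the relative Grassmann bundle $\mathbf{Gr}(d-k,\mathcal{Q})$ over $\Na X$, map it properly to $X\times G(d,n)$, and invoke Remmert's proper mapping theorem to identify the closed image with $\overline V=C_d(X)$; this buys an explicit global model of $C_d(X)$ (in particular it exhibits $C_d(X)$ as a proper, generically bijective image of a Grassmann bundle over the Nash modification, which also re-proves the dimension count and is useful in its own right), at the cost of more machinery than the paper deploys. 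For the integral condition the paper computes $D_{(z,W)}\nu_d(\vec u,\vec v)=\vec u\in T_zX\subset W$, which as written only applies at points of $C_d(X)$ lying over $X^0$; your closedness-plus-density argument (the condition $T_{(z,W)}C_d(X)\subset\mathcal{H}(z,W)$ is closed on the smooth locus and holds on the dense open set $V$) is a more careful version that also covers smooth points of $C_d(X)$ sitting over $\mathrm{Sing}(X)$, which the definition of integral subvariety does require. Your closing remark that a naive Remmert--Stein extension across $\mathrm{Sing}(X)\times G(d,n)$ fails for dimension reasons is accurate and correctly identifies why some construction such as yours (or an explicit equational argument) is needed to justify the paper's one-line claim of analyticity.
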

   \begin{proof}
      That $C_d(X)$ is analytic follows from the fact that $X$ is analytic and the incidence condition $T_zX^0 \subset W$ defining the fiber over a 
     smooth point is algebraic. Moreover the map  $\nu_d$ is proper because $G(d,n)$ is compact. Regarding its dimension, it is the same calculation 
      we did in proposition \ref{Cotadimensional}. That is, for any smooth point $z \in X^0$ we have that
       \[\nu_d^{-1}(z) = \{z\} \times \{W \in G(d,n) \, | \, W \supset T_zX^0\} \] 
       and the set in the second factor is a Grassmannian $G(d-k,n-k)$. This implies that for a smooth germ $(\C^k,0) \subset (\C^n,0)$ 
       we have that $C_d(\C^k)$ is isomorphic to $\C^k \times G(d-k,n-k)$ and so if $z$ is a smooth point of $X$  then any point $(z,W) \in 
       \nu_d^{-1}(z)$ is smooth in $C_d(X)$. \\ 
         Finally, recall that by definition, for any point $(z,W) \in \C^n \times G(d,n)$ we have 
          \[ \mathcal{H}(z,W) = W \times T_WG(d,n)\]
          Now, since the map $\nu_d$ is just the restriction of the projection onto the first factor, then the tangent map $D_{(z,W)}\nu_d $
          is also a projection and for any tangent vector $(\vec{u},\vec{v}) \in T_{(z,W)}C_k(X) \subset T_z\C^n \times T_WG(k,n)$ we have that
          \[D_{(z,W)}\nu_k(\vec{u},\vec{v})=\vec{u} \in T_zX \subset W\]
          that is $(\vec{u},\vec{v}) \in \mathcal{H}(z,W)$ and so $C_d(X) $ is an integral subvariety of $\left(\C^n \times G(d,n), \mathcal{H}\right)$.
   \end{proof}
   
   \begin{Theorem} \label{Characterization}Let $Z \subset \C^n \times G(d,n)$ be a reduced, analytic and irreducible subvariety and $X=\pi(Z)$ where $\pi:\C^n \times G(d,n) \to \C^n$ 
     denotes  the projection to $\C^n$.
     If the dimension of $X$ is equal to $t$, then the following statements are equivalent:
     \begin{itemize}
     \item[i)] $Z$ is the d-conormal space of $X\subset \C^n$.
     \item[ii)] $Z$ is an integral subvariety of $\left(\C^n \times G(d,n), \mathcal{H}\right)$ of dimension $t+(d-t)(n-d)$
     \end{itemize}
   \end{Theorem}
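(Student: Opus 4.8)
The plan is to prove the two implications separately. The implication i)$\Rightarrow$ii) is essentially Lemma~\ref{ConormalIntegral}: if $Z=C_d(X)$ then, $Z$ being irreducible, so is $X=\pi(Z)$, and $X$ is a reduced irreducible germ of dimension $t$; Lemma~\ref{ConormalIntegral} applied with $k=t$ then says precisely that $C_d(X)$ is an integral subvariety of $\bigl(\C^n\times G(d,n),\mathcal{H}\bigr)$ of dimension $t+(d-t)(n-d)$. So the content lies in ii)$\Rightarrow$i).

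For ii)$\Rightarrow$i) I would first record, exactly as in the proof of Proposition~\ref{Cotadimensional}, that $X=\pi(Z)$ is an irreducible analytic subvariety of $\C^n$ with $t=\dim X\le d$ and that $\pi\colon Z\to X$ is generically submersive; hence there is a dense open subset $Z'\subset Z$, lying over a dense open $V\subset X^0$ of the smooth locus of $X$, consisting of smooth points $(p,W)$ of $Z$ at which $D_{(p,W)}\pi\colon T_{(p,W)}Z\to T_pX$ is surjective. At such a point the integral condition $T_{(p,W)}Z\subset\mathcal{H}(p,W)=W\times T_WG(d,n)$ forces $T_pX=D_{(p,W)}\pi\bigl(T_{(p,W)}Z\bigr)\subset W$, so that $(p,W)$ belongs to the set $\{(z,W)\in X^0\times G(d,n)\mid T_zX^0\subset W\}$ whose closure is $C_d(X)$. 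Thus $Z'\subset C_d(X)$, and since $C_d(X)$ is closed while $Z'$ is dense in the irreducible $Z$, we conclude $Z\subset C_d(X)$.

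To finish the implication I would compare dimensions. By hypothesis $\dim Z=t+(d-t)(n-d)$, and by Lemma~\ref{ConormalIntegral} the space $C_d(X)$ has the same dimension; it is moreover irreducible, being the closure of the $G(d-t,n-t)$-bundle over the irreducible smooth locus $X^0$ described in that lemma's proof. An inclusion of irreducible analytic varieties of equal dimension is an equality, so $Z=C_d(X)$, which is statement i).

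The one point I expect to require care is the genericity step in the second implication: one must know that the dominant morphism $\pi\colon Z\to X$ of reduced irreducible complex analytic spaces becomes a submersion over a dense open subset of $X^0$ once restricted to the smooth locus of $Z$. This is the analytic Sard/generic-smoothness phenomenon, available because we are in characteristic zero, and it is already used in the proof of Proposition~\ref{Cotadimensional}; the only extra bookkeeping is to check that the resulting good set $Z'$ is dense in $Z$, which is immediate since $Z$ is irreducible. Everything else — the tangent-map inclusion and the final dimension count — is routine.
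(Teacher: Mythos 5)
Your proposal is correct and takes essentially the same route as the paper: i)$\Rightarrow$ii) is exactly Lemma~\ref{ConormalIntegral}, and ii)$\Rightarrow$i) rests on the same key step, namely that at a generic smooth point $(p,W)\in Z$ generic submersivity of $\pi$ together with $T_{(p,W)}Z\subset \mathcal{H}(p,W)$ forces $T_pX\subset W$. The only (harmless) cosmetic difference is how the final equality is drawn: the paper identifies the $(d-t)(n-d)$-dimensional fiber of $\pi|_Z$ over a generic smooth point with the irreducible Grassmannian fiber $\{W\mid T_pX\subset W\}$ of $C_d(X)$ by a fiberwise dimension count, whereas you pass to the global inclusion $Z\subset C_d(X)$ and conclude from equality of dimensions and irreducibility of $C_d(X)$.
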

   \begin{proof}
       $i) \Rightarrow ii)$ was proved in lemma \ref{ConormalIntegral}. \\ 
        First note that since $X$ is of dimension $t$ and $Z$ is of dimension $t+(d-t)(n-d)$ then the generic
        fiber of $\pi:Z \to X$ is of dimension $(d-t)(n-d)$. Now, let $z$ be a smooth point of $X$, then for any sufficiently general smooth
        point of its fiber $(z,W) \in Z$ we have that 
        \[D_{(z,W)}\pi(T_{(z,W)}Z)=T_zX\]
        Since $Z$ is an integral subvariety we have that $T_{(z,W)}Z\subset W\times T_W G(d,n)$ and so
        $T_zX \subset W$. This implies that the $(d-t)(n-d)$ dimensional fiber $\pi^{-1}(Z)$ is contained in the $(d-t)(n-d)$ dimensional
        variety $\{z\} \times \{W \in G(d,n) \,| \, T_zX \subset W\}$, and so they must be equal.   But this is precisely the definition
        of the $d-$conormal variety $C_d(X)$.
          \end{proof}
          
            Note that when $d=n-1$ then $t+ (d-t)(n-d)=n-1$ and $C_d(X)\subset \C^n \times \check{\P}^{n-1}$ is the usual conormal space of $X$. Moreover, 
         this theorem recovers the characterization of conormal varieties as legendrian subvarieties of $\C^n \times \check{\P}^{n-1}$
         with its canonical contact structure. (See \cite[Section 10.1, pg 91-92]{Pha1})
         
        \begin{corollary} Let $Z$ be an integral subvariety of $\left(\C^n \times G(d,n), \mathcal{H}\right)$ of dimension $d$. Then $Z$ is the Nash modifcation
        of its image in $\C^n$ if and only if for every smooth point $(z,W) \in Z^0$ the tangent space $T_{(z,W)}Z$ is transverse to the subspace $T_WG(d,n)$ 
        of $T_{(z,W)}\left(\C^n \times G(d,n)\right)$.         \end{corollary}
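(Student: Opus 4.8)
The plan is to reduce everything to Theorem \ref{Characterization} together with one linear-algebra observation about the projection. First, under the identification $T_{(z,W)}(\C^n\times G(d,n))=\C^n\times T_WG(d,n)$ the differential $D_{(z,W)}\pi$ of $\pi:\C^n\times G(d,n)\to\C^n$ is the projection onto the first factor, whose kernel is precisely $\{0\}\times T_WG(d,n)$, that is, the subspace $T_WG(d,n)$ of the statement. Consequently, at a smooth point $(z,W)\in Z^0$ the transversality condition $T_{(z,W)}Z\cap T_WG(d,n)=\{0\}$ is equivalent to $D_{(z,W)}\pi$ being injective on the $d$-dimensional space $T_{(z,W)}Z$, hence to $\pi|_Z$ being an immersion at $(z,W)$; and since integrality already gives $D_{(z,W)}\pi(T_{(z,W)}Z)\subseteq W$ with $\dim W=d$, it is equivalent to the equality $D_{(z,W)}\pi(T_{(z,W)}Z)=W$. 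Second, ``$Z$ is the Nash modification of $X=\pi(Z)$'' means $Z=C_{\dim X}(X)$, which lives in $\C^n\times G(\dim X,n)$; as $Z\subset\C^n\times G(d,n)$ this requires $\dim X=d$, and conversely if $\dim X=d$ then $Z$ is an integral subvariety of dimension $d=\dim X+(d-\dim X)(n-d)$, so Theorem \ref{Characterization} identifies $Z$ with $C_d(X)$, the Nash modification. Thus ``$Z$ is the Nash modification'' is equivalent to $\dim X=d$, and the corollary becomes the assertion that $\dim X=d$ holds if and only if $\pi|_Z$ is an immersion at every smooth point of $Z$.

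For the ``if'' direction I would argue as follows. Suppose the transversality condition holds at every smooth point, and pick any $(z,W)\in Z^0$. By the observation above $D_{(z,W)}\pi$ is injective on the $d$-dimensional space $T_{(z,W)}Z$, so the image $X=\pi(Z)$ contains a $d$-dimensional immersed piece near $z$ and hence $\dim X\ge d$. Combined with the bound $\dim X=t\le d$ from Proposition \ref{Cotadimensional}, this forces $\dim X=d$, and the equivalence recorded above identifies $Z$ with $C_d(X)$, the Nash modification. (In fact transversality at a single general smooth point already suffices for this implication.)

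For the ``only if'' direction I would start from $Z=C_d(X)$ with $\dim X=d$ and try to establish injectivity of $D_{(z,W)}\pi$ on $T_{(z,W)}Z$ at \emph{every} smooth point $(z,W)\in Z^0$. Over the smooth locus this is immediate: when $z\in X^0$ the fiber $\nu_d^{-1}(z)$ is the single point $(z,T_zX^0)$, since here $G(d-\dim X,n-\dim X)$ is a point, so by Lemma \ref{ConormalIntegral} the map $\nu_d$ restricts to an isomorphism over $X^0$ and is in particular an immersion there, which gives the transversality. The remaining case is that of smooth points of $Z$ lying over the singular locus of $X$, i.e. points $(z,W)$ with $z\in\mathrm{Sing}(X)$ and $W$ a limit of tangent $d$-planes. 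Here I would use integrality to write $D_{(z,W)}\pi(T_{(z,W)}Z)\subseteq W$ and attempt to prove the equality $D_{(z,W)}\pi(T_{(z,W)}Z)=W$, i.e. that $\nu_d$ does not drop rank at $(z,W)$; concretely one would choose a local analytic branch of $Z$ through $(z,W)$, lift it to a parametrization, and examine the leading terms of the induced map to $\C^n$ together with the accompanying family of $d$-planes in $G(d,n)$.

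The main obstacle is exactly this last step. Demanding transversality at \emph{every} smooth point — rather than at a general one — amounts to requiring that the Nash map $\nu_d:C_d(X)\to X$ be immersive at all smooth points of $C_d(X)$, including those over $\mathrm{Sing}(X)$ where tangent $d$-planes accumulate. The limiting plane $W$ records the direction of $T_{z'}X^0$ for nearby smooth $z'$, but the \emph{speed} at which the base point $z'$ moves may degenerate relative to the motion of $W$ in $G(d,n)$, so that $T_{(z,W)}Z$ could acquire components inside the Grassmannian directions $T_WG(d,n)$ and $D_{(z,W)}\pi$ could lose rank. Showing that this rank drop cannot occur at any smooth point of $C_d(X)$ is the crux of the ``only if'' direction; it is where the delicate geometry of limits of tangent $d$-planes enters, and I would expect the careful analysis of $D\nu_d$ at smooth points over the singular locus to be the hardest and most subtle part of the proof.
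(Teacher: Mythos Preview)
Your argument tracks the paper's proof almost verbatim. For $\Leftarrow$, the paper also picks a generic smooth point of $Z$ where $\pi|_Z$ is submersive, observes that $\ker\bigl(D_{(z,W)}\pi|_{T_{(z,W)}Z}\bigr)=T_{(z,W)}Z\cap T_WG(d,n)$, uses the transversality hypothesis to make this zero, concludes $\dim T_zX=d$, and invokes Theorem~\ref{Characterization}. For $\Rightarrow$, the paper's argument is exactly your ``immediate'' case: it notes that $\nu$ is an isomorphism over $X^0$, so $D\nu$ is an isomorphism at $(z_0,T_{z_0}X)$ for $z_0\in X^0$, hence transversality holds there --- and then it stops. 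The paper does \emph{not} address smooth points of $\Na X$ lying over $\mathrm{Sing}(X)$.

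Your concern about that residual case is not merely ``the hardest part'': it is an actual obstruction to the corollary as literally stated. Take the cusp $X=\{y^2=x^3\}\subset\C^2$, so $d=1$ and $G(1,2)=\P^1$. The Nash modification is parametrized by $t\mapsto(t^2,t^3,[2:3t])$; in the affine chart with slope coordinate $a$ this is $t\mapsto(t^2,t^3,\tfrac{3}{2}t)$, a smooth embedded curve. The point $t=0$ is a smooth point of $Z=\Na X$ sitting over the singular point of $X$, and the tangent line there is spanned by $(0,0,\tfrac{3}{2})$, which lies entirely inside $T_WG(1,2)=\ker D\pi$. So transversality fails at this smooth point, and the $\Rightarrow$ direction is false with ``every smooth point'' read literally. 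What the paper's proof (and yours) actually establishes is the version with ``every'' replaced by ``a general'' smooth point of $Z$; that weaker hypothesis is also all that is used in the $\Leftarrow$ direction, so the corrected equivalence goes through by your argument without further work.
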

        \begin{proof}
        $ \Rightarrow]$ Note that for any point $(z,W)$ in  $\C^n \times G(d,n)$ the kernel of the differential $D\pi: T_z \C^n \times T_W G(d,n) \to T_z\C^n$ is $T_WG(d,n)$. On the other hand,
        the Nash modification $\nu:\Na X \to X$ is an isomorphism over the smooth part of $X$ so for any smooth point $z_0 \in X^0$ we have that the differential 
           \[D_{(z_0,T_{z_0}X)}\nu: T_{(z_0,T_{z_0}X)} \Na X \to T_{z_0} X\]
           is an isomorphism. Since the map $\nu$ can be realized as the restriction to $\Na X$ of the projection $\pi: \C^n \times G(d,n) \to \C^n$ this 
           implies that $T_{(z_0,T_{z_0}X)} \Na X$ is transverse to $T_WG(d,n)$.\\
        $\Leftarrow]$ We know that the projection $\pi: Z \to X$ is generically a submersion with the kernel of the differential $D_{(z,W)}\pi:T_{(z,W)}Z \to T_zX$ being equal to
        the intersection of $T_{(z,W)}Z$ and $T_WG(d,n)$ , but the transversality condition means that this this intersection is of dimension zero  which implies
        that $T_zX$ and therefore $X$ is of dimension $d$. By theorem \ref{Characterization} this is equivalent to $Z$ being the Nash modification of $X$.
        \end{proof}
        
        \begin{example}
          For a germ of surface $(S,0) \subset (\C^5,0)$  we have the following spaces:
          \begin{align*}
             \mathrm{Nash \, modification}   \;    & \nu: \Na S \to S \,   \mathrm{dimension} \, 2\\
             \mathrm{3-conormal } \;  &  \nu_3: C_3(S) \to  S \,   \mathrm{dimension} \, 4 \\ 
             \mathrm{Conormal}  \;   &  \kappa: C(S) \to S \,  \mathrm{dimension} \, 4
          \end{align*}
             Since  $\Na S \subset S \times G(2,5)$ and $C_3(S) \subset S \times G(3,5)$ it would be interesting to try to use that these two Grassmannians
             are isomorphic to define a morphism $\Na S \to C_3(S)$ making the following diagram commute:
                  \[ \xymatrix{  \Na S \ar[rr] \ar[dr]_\nu & & C_3(S)\ar[dl]^{\nu_3} \\
                        & S & }\]
             This could be a first step to work out a way from the conormal fiber $\kappa^{-1}(0)$ to the Nash fiber $\nu^{-1}(0)$.
        \end{example}

           As a first application of how this d-conormal spaces can be used, we will  characterize Whitney conditions in the Nash modification of $X$ 
           in an analogous way to the characterization in the conormal space $C(X)$ given in \cite[Proposition 1.3.8]{L-T2}.\\
           
                 Consider a germ of analytic, reduced and irreducible singularity $(X,0)\subset (\C^n,0)$ of dimension $d$  such that 
            its singular locus $(Y,0)$ is smooth of dimension $t$. We will fix a coordinate system $(y_1\ldots,y_n,z_{t+1},\ldots,z_n)$ in $\C^n$
            and we can assume that $Y$ is equal to $\C^t \times \{0\}$.\\
            
                 Note that the d-conormal of $C_d(Y) \subset \C^n \times G(d,n)$  of $Y$ in  $\C^n$ is equal to $Y \times \{W \in G(d,n) \, | \, W \supset Y\}$
              and so it is enough to consider the charts $\C^n \times G_d^0(n,W_1)$ of $\C^n \times G(d,n)$ where $W_1$ is a coordinate $n-d$ linear subspace 
              such that $W_1 \cap Y =\{0\}$. \\
              
               	Moreover, after identifying $G_d^0(n,W_1)$ with $\mathrm{Hom}_\C(W_0,W_1)$, we can take 
              $W_0=\C \cdot  \left< e_1,\ldots,e_t,e_{i_{t+1}},\ldots, e_{i_d}\right>$ and in this chart the $W$'s that contain $Y$ correspond to 
              linear morphisms $L:W_0 \to W_1$ such that $Y \subset \mathrm{Ker}(L)$. \\
              
                We will use the fact that in complex analytic geometry Whitney's condition b) is equivalent (\cite[Chap. 5]{Te1}) to condition w) which we now recall.
                The couple $(X^0,Y)$ satisfies condition w) at the origin if there exists an open neighborhood of the origin  $U \subset X$ and a real positive constant $C$
                such that for every $y \in U \cap Y$ and $x \in U \cap X^0$ we have that 
                \[\delta(T_yY,T_xX^0) \leq C d(x,Y)\]
                where $d(x,Y)$ is the euclidean distance in $\C^n$, $\delta$ is defined for linear subspaces $A,B \subset \C^n$  by:
                \[ \delta(A,B):=\sup_{\vec{u} \in B^\perp \setminus \{0\}, \vec{v} \in A \setminus\{0\}} \frac{\left| \left< \vec{u}, \vec{v} \right> \right|}{|| \vec{u}|| \,|| \vec{v}||}\]
                and $\left< \vec{u}, \vec{v} \right>$ denotes the usual hermitian product in $\C^n$.

           \begin{proposition}  Let $\mathcal{I}$ denote the ideal of $\mathcal{O}_{\Na X}$ that defines the intersection
           $C_d(Y) \cap \Na X$ and $J$ the ideal defining $\nu^{-1}(Y)$. 
           \begin{enumerate}
           \item The couple $(X\setminus Y, Y)$ satisfies Whitney's condition $a)$ at the origin if and only if at every point $(0,T)\in \nu^{-1}(0)$
            we have that $\sqrt{\mathcal{I}}=\sqrt{J}$ in $O_{\Na X, (0,T)}$.
           \item The couple $(X\setminus Y, Y)$ satisfies condition $w)$ at the origin if and only if at every point $(0,T)\in \nu^{-1}(0)$ the ideals $\mathcal{I}$ 
                    and  $J$  have the same integral closure in $\mathcal{O}_{\Na X, (0,T)}$.
           \end{enumerate}
            \end{proposition}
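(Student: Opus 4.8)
The plan is to treat each point $(0,T)\in\nu^{-1}(0)$ separately and to work in a Grassmann chart adapted both to $Y$ and to $T$. Since $d=\dim X$ we have $C_d(X)=\Na X$ and $\nu_d=\nu$. Choose a direct sum decomposition $\C^n=W_0\oplus W_1$ with $W_0\supseteq T_0Y$, $W_1\cap T_0Y=\{0\}$ and $W_1\pitchfork T$; this is possible because $t\leq d$ and the last two conditions are generic in $W_1$. Fix a basis of $W_0$ whose first $t$ vectors span $T_0Y$. In the resulting chart on $\C^n\times G(d,n)$, with coordinates $(y_1,\dots,y_t,z_{t+1},\dots,z_n,a_{ij})$ as in the set-up above (so $Y=\{z_{t+1}=\dots=z_n=0\}$ and the $a_{ij}$ are the entries of $L_W$ in the chosen bases), the subvariety $C_d(Y)=Y\times\{W\in G(d,n)\mid W\supseteq T_0Y\}$ is cut out by $z_{t+1}=\dots=z_n=0$ together with $a_{ij}=0$ for $1\leq i\leq n-d$ and $1\leq j\leq t$. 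Hence, writing the defining ideals as pullbacks to $\mathcal{O}_{\Na X,(0,T)}$,
\[J=(z_{t+1},\dots,z_n)\,\mathcal{O}_{\Na X},\qquad \mathcal{I}=J+\bigl(a_{ij}\ :\ 1\leq i\leq n-d,\ 1\leq j\leq t\bigr)\,\mathcal{O}_{\Na X},\]
so that $J\subseteq\mathcal{I}$ (whence $\overline{J}\subseteq\overline{\mathcal{I}}$) and, set-theoretically, $C_d(Y)\cap\Na X\subseteq\nu^{-1}(Y)$ always.

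For part (1) I would first use the standard fact that Whitney's condition a) at the origin is equivalent to its holding at every point of $Y$ in a neighbourhood of $0$ (the locus where a) holds is open in $Y$). By the Nullstellensatz for germs of complex analytic sets, together with the compactness of $\nu^{-1}(0)$, the equalities $\sqrt{\mathcal{I}}=\sqrt{J}$ at all points of $\nu^{-1}(0)$ amount to the coincidence, in a neighbourhood of $\nu^{-1}(0)$, of $V(\mathcal{I})=C_d(Y)\cap\Na X$ and $V(J)=\nu^{-1}(Y)$; and since the former is always contained in the latter, this is the inclusion $\nu^{-1}(Y)\subseteq C_d(Y)$ near $\nu^{-1}(0)$. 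As $\nu$ is proper, a neighbourhood of $\nu^{-1}(0)$ contains the preimage of a neighbourhood of $0$, so the inclusion says exactly that for every $y\in Y$ near $0$ every limit $T'$ of tangent spaces of $X^0$ at $y$ contains $T_0Y=T_yY$, i.e. Whitney a) holds along $Y$ near $0$; here one uses that $C_d(Y)$ is already closed, so that membership in it is precisely the condition $T'\supseteq T_yY$. This settles (1).

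For part (2) the core is a comparison, valid on a suitable neighbourhood of $(0,T)$ in the chart, between the function $(\vec{z},W)\mapsto\delta(T_0Y,W)$ and the entries $a_{ij}$ with $j\leq t$. Writing $W$ as the graph of $L=L_W\colon W_0\to W_1$ and using $\delta(T_0Y,W)=\|P_{W^\perp}|_{T_0Y}\|$, the norm of the orthogonal projection onto $W^\perp$ restricted to $T_0Y$, the identity $v=(v,Lv)-(0,Lv)$ for $v\in T_0Y\subseteq W_0$, with $(v,Lv)\in W$, gives at once $\delta(T_0Y,W)\leq\|L|_{T_0Y}\|$; conversely, projecting a unit vector $v\in T_0Y$ orthogonally onto $W$ and reading off its two components in $\C^n=W_0\oplus W_1$ yields $\|L|_{T_0Y}\|\leq(\|\pi_1\|+\|L\|\,\|\pi_0\|)\,\delta(T_0Y,W)$. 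Since $\|L_W\|$ is bounded near $T$ and $\|L|_{T_0Y}\|$ is comparable to $\max_{i,\,j\leq t}|a_{ij}|$ by equivalence of norms, we obtain $\delta(T_0Y,W)\asymp\max_{i,\,j\leq t}|a_{ij}|$ on a neighbourhood of $(0,T)$. Moreover $d(x,Y)=\bigl(\sum_{j>t}|z_j(x)|^2\bigr)^{1/2}\asymp\max_{j>t}|z_j|$, and $T_yY=T_0Y$ for every $y\in Y$, so $\delta(T_yY,T_xX^0)$ depends on $x$ alone.

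Combining these observations, condition w) at the origin becomes an inequality $\max_{i,\,j\leq t}|a_{ij}|\leq C\max_{j>t}|z_j|$ holding on $\nu^{-1}(X^0)$ near each point of $\nu^{-1}(0)$; as $\nu^{-1}(X^0)$ is dense in $\Na X$ and the functions involved are continuous, it persists on the whole of $\Na X$. By the analytic ({\L}ojasiewicz-type) characterization of the integral closure of an ideal on a reduced complex space, the latter is equivalent to $a_{ij}|_{\Na X}\in\overline{J}$ for all $i$ and all $j\leq t$, hence to $\mathcal{I}\subseteq\overline{J}$, hence, using $\overline{J}\subseteq\overline{\mathcal{I}}$, to $\overline{\mathcal{I}}=\overline{J}$ at every point of $\nu^{-1}(0)$; a compactness argument then provides a common neighbourhood and constant. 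The hard part is precisely this comparison lemma $\delta(T_0Y,W)\asymp\max_{j\leq t}|a_{ij}|$, together with the care needed to pass from the {\L}ojasiewicz inequality on the dense smooth locus $\nu^{-1}(X^0)$ to the whole of $\Na X$ and to invoke the integral closure dictionary correctly: one must use that $\Na X$ is reduced (being the closure, with its reduced structure, of the graph of the Gauss map over the smooth irreducible $X^0$) and that $J$ and $\mathcal{I}$ are given their natural, possibly non-reduced, pullback structures.
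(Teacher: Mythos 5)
Your proposal is correct and follows essentially the same route as the paper: the same chart adapted to $Y$ and $T$, in which $J=(z_{t+1},\dots,z_n)\mathcal{O}_{\Na X}$ and $\mathcal{I}=J+(a_{ij}:\ j\leq t)\mathcal{O}_{\Na X}$, the Lejeune--Teissier ({\L}ojasiewicz-type) criterion for membership in $\overline{J}$, a two-sided comparison of $\delta(T_0Y,W)$ with $\max_{i,\,j\leq t}|a_{ij}|$ on a bounded chart neighbourhood, and compactness of $\nu^{-1}(0)$. The differences are only presentational: you phrase the comparison via operator norms where the paper computes explicitly with the Hermitian product, and you spell out two points the paper leaves implicit (openness of the $a)$-regular locus along $Y$ in part (1), and the density/continuity step extending the $w)$-inequality from $\nu^{-1}(X^0)$ to all of $\Na X$).
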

           \begin{proof} 
              Note that we always have the inclusion $C_d(Y) \cap \Na X \subset \nu^{-1}(Y)$, or equivalently $ \mathcal{I} \supset J$.\\
              For 1), recall that Whitney's condition a) demands that every limit of tangent spaces $T$ to $X$ at $0$ contains the tangent space to $Y$ at $0$, which
             we can identify with $Y$ since it is linear.  This is exactly what the set-theoretical equality $C_d(Y) \cap \nu^{-1}(0) = \nu^{-1}(0)$ means which is equivalent
             to  $\sqrt{\mathcal{I}}=\sqrt{J}$ in $\mathcal{O}_{\Na X, (0,T)}$ for every  point $(0,T)\in \nu^{-1}(0)$.\\
             
             2) $\Leftarrow ]$
             
                Now suppose that at every point $(0,T)\in \nu^{-1}(0)$ the ideals $\overline{\mathcal{I}}$  and  $\overline{J}$  are equal in $O_{\Na X, (0,T)}$,
             in particular they have the same radical, and so by 1) we have that $Y \subset T$ and  by the discussion prior to the proposition we can see it
             in a chart of $\C^n \times G(d,n)$ of the form $\C^n \times \mathrm{Hom}_\C(W_0,W_1)$, where $W_1$ is an $n-d$ linear coordinate subspace
             transversal to $Y$   and  the $d$  linear subspace   $W_0$ can be taken  of the form  $\C \cdot  \left< e_1,\ldots,e_t,e_{i_{t+1}},\ldots, e_{i_d}\right>$.\\
             
                 In this chart we have a coordinate system \[(y_1,\ldots,y_t,z_{t+1}, \ldots,z_n, a_{ij})  i=1,\ldots, n-d, j=1,\ldots,d\]
                where $J=\left< z_{t+1},\ldots,z_n\right>\mathcal{O}_{\Na X}$ and since $W \in G(d,n) $ contains $Y$ if and only if $Y$ is in the kernel of the corresponding
                linear map $L_W \in \mathrm{Hom}_\C(W_0,W_1)$, that is $L_W(e_i)=\vec{0}$ for $i=1,\ldots,t$ we have that 
               \[\mathcal{I}=\left< z_{t+1},\ldots,z_n, a_{ij} ; i=1,\ldots,n-d; j=1,\ldots t \right>\mathcal{O}_{\Na X}\]
               \[J=\left< z_{t+1},\ldots,z_n\right>\]
               
              The equality of integral closures $\overline{\mathcal{I}} = \overline{J}$ implies that the coordinate functions 
                \[ a_{ij} \in \overline{J}\mathcal{O}_{\Na X, (0,T)}\]
                and by \cite[Thm 2.1]{Lej-Te} this is equivalent to the existence of an open set $V' \subset \Na X$ and a real positive constant $C_{V'}$ such that
                $(0,T) \in V'$ and for every $(p,W) \in V'$ we  have that 
                \[|a_{ij}| \leq C_{V'} \sup \{|z_{t+1}|,\ldots,|z_n|\} \simeq C_{V'} d(p,Y)\]
                Doing this for every point $(0,T) \in \nu^{-1}(0)$ we obtain an open cover of the fiber and since it is compact we can obtain a finite subcover
                \[ \nu^{-1}(0) \subset (V_1,C_1) \cup \cdots \cup (V_r,C_r)\] 
                Note that $U:= \nu(V_1 \cup V_2 \cup \cdots \cup V_r)$ is an open neighborhood of the origin in $X$, and define 
                $C:=\mathrm{max}\{C_1, \ldots, C_r\}$. Now for any smooth point $p \in U \cap X^0$ we have that the point $(p,T_pX^0)$
                 \[|a_{ij}| \leq C_j \sup \{|z_{t+1}|,\ldots,|z_n|\} \leq C \sup \{|z_{t+1}|,\ldots,|z_n|\} \simeq C d(p,Y)\]
                 Now to finish the proof we will show that 
                 \[\ \delta(T_yY,T_pX^0) \leq \left(C t\sqrt{n-d})\right)  d(p,Y)\]
                 Using the local coordinates of the chosen chart it is enough to prove that for any point $(x,W)$ in this chart we have that 
                 \[ \delta(Y,W) \leq t\sqrt{n-d} \sup\left\{|a_{ij}|,\, i=1,\ldots,n, \, j=1,\ldots,t\right\}\]
                 By definition we have 
              \[ \delta(Y,W):=\sup_{\vec{u} \in W^\perp \setminus \{0\}, \vec{v} \in Y \setminus\{0\}} \frac{\left| \left< \vec{u}, \vec{v} \right> \right|}{|| \vec{u}|| \,|| \vec{v}||}\]
                  Now $Y=\C \cdot \left<\hat{e}_1,\ldots,\hat{e}_t\right>$ and $W=\C \cdot \left< (\hat{e}_1,a_{i1}), \ldots, (\hat{e}_d,a_{id})\right>$
                  and using the Hermitian product we  get  the following relations for $\vec{u} \in W^\perp$:
                \begin{align*}
                   0=\left<  (\hat{e}_1,a_{i1}), \vec{u}  \right> &= \overline{u_1} + a_{11}\overline{u_{d+1}}+a_{21}\overline{u_{d+2}}+ \cdots +a_{(n-d)1}\overline{u_{n}}\\
                   0=\left<  (\hat{e}_2,a_{i2}), \vec{u}  \right> &= \overline{u_2} + a_{12}\overline{u_{d+1}}+a_{22}\overline{u_{d+2}}+ \cdots +a_{(n-d)2}\overline{u_{n}}\\
                     \vdots \\
                   0=\left<  (\hat{e}_d,a_{id}), \vec{u}  \right> &= \overline{u_d} + a_{1d}\overline{u_{d+1}}+a_{2d}\overline{u_{d+2}}+ \cdots +a_{(n-d)d}\overline{u_{n}}  
                \end{align*}
                And so we have:
                \begin{align*}
                    \frac{\left| \left< \vec{u}, \vec{v} \right> \right|}{|| \vec{u}|| \,|| \vec{v}||} & = \frac{\left| \left< \vec{u}, \sum_{i=1}^t \lambda_i \hat{e}_i\right> \right|}{
                      || \vec{u}|| \,|| \sum_{i=1}^t \lambda_i \hat{e}_i||}= \frac{\left|   \sum_{i=1}^t \overline{\lambda_i} u_i\right|}{
                      || \vec{u}|| \,|| \sum_{i=1}^t \lambda_i \hat{e}_i||}\\
                       &\leq \frac{   \sum_{i=1}^t \left| \overline{\lambda_i} u_i \right|}{|| \vec{u}|| \,|| \sum_{i=1}^t \lambda_i \hat{e}_i||}
                          \leq  \frac{   \left| \overline{\lambda_1} u_1 \right|}{ || \vec{u}|| \,|| \lambda_1 \hat{e}_1||} + \cdots + 
                                  \frac{   \left| \overline{\lambda_t} u_t \right|}{ || \vec{u}|| \,|| \lambda_t\hat{e}_t||} \\
                        & = \sum_{i=1}^t \frac{u_i}{||\vec{u}||} = \frac{| \sum_{j=1}^{n-d}a_{j1}\overline{u_{d+j}}|}{||\vec{u}||}  + \cdots +   
                            \frac{| \sum_{j=1}^{n-d}a_{jt}\overline{u_{d+j}}|}{||\vec{u}||} \\
                        &\leq || (\underline{0},a_{11},\ldots,a_{(n-d)1}|| + \cdots + ||(\underline{0},a_{1t},\ldots,a_{(n-d)t}|| \\
                        & \leq \sqrt{n-d} \sup\{|a_{11}|,\ldots,|a_{(n-d)1}|\}  + \cdots + \sqrt{n-d} \sup\{|a_{1t}|,\ldots,|a_{(n-d)t}|\} \\
                        & \leq t\sqrt{n-d} \sup\{|a_{ij}|, \, i=1,\ldots,n-d; \, j=1,\ldots,t\}   
                \end{align*}
                
                2) $\Rightarrow ]$ \\
                
                       By hypothesis the couple $(X\setminus Y, Y)$ satisfies condition w) at the origin, and since in complex analytic geometry this condition
                       is equivalent to Whitney conditions, then for every point $(0,T) \in \nu^{-1}(0)$ we have that $Y \subset T$ and so we can restrict ourselves
                       to look at the charts we have been working on. Without loss of generality we will look at the chart $\C^n \times \mathrm{Hom}_\C(W_0,W_1)$
                       with coordinate system
                        \[\left(y_1,\ldots,y_t,z_{t+1},\ldots,z_n,a_{ij}\right); \, i=1\ldots,n-d, \, j=1\ldots,d \]
                       where $W_0=\C \cdot  \left< e_1,\ldots,e_d\right>$ and $W_1=\C \cdot  \left< e_{d+1},\ldots, e_n\right>$.
                       In this coordinate system we have the ideals
                       \begin{align*}
                           J=& \left< z_{t+1},\ldots,z_n\right>\mathcal{O}_{\Na X} \\
                           \mathcal{I}=&\left< z_{t+1},\ldots,z_n, a_{ij} ; i=1,\ldots,n-d; j=1,\ldots t \right>\mathcal{O}_{\Na X}
                       \end{align*}
                      and we want to prove that $\overline{\mathcal{I}}=\overline{J}$ in $\mathcal{O}_{\Na X, (0,T)}$ for every point $(0,T) \in \nu^{-1}(0)$. \\
                      
                        Again by hypothesis we have an open neighborhood of the origin $U \subset X$ and a real positive constant $C$ such that for every smooth point 
                        $p \in U \cap X^0$ 
                        \[ C \sup\left\{ |z_{t+1}|,\ldots, |z_n|\right\} \geq \delta (Y,T_pX^0):=\sup_{\vec{u} \in (T_pX^0)^\perp \setminus \{0\}, \vec{v} \in Y \setminus\{0\}} 
                            \frac{\left| \left< \vec{u}, \vec{v} \right> \right|}{|| \vec{u}|| \,|| \vec{v}||}\]
                         Note that for any $W \in \mathrm{Hom}_\C(W_0,W_1)$ with coordinates $(b_{ij})$ in this chart, using the relations previously obtained, we have 
                         that $\vec{u} \in W^\perp$ if and only if it is of the form: 
                         	  \[  \left( \begin{array}{c}  u_1 \\ u_2 \\ \vdots \\u_d\\ u_{d+1}\\ \vdots\\ \cdot \\ u_n \end{array} \right) =
	                   \lambda_1 \left( \begin{array}{c}  -b_{11} \\ -b_{12} \\ \vdots \\-b_{1d} \\ 1 \\  0 \\ \vdots \\0\end{array} \right) + 
	                    \lambda_2 \left( \begin{array}{c}  -b_{21} \\ -b_{22} \\ \vdots \\-b_{2d} \\ 0 \\  1 \\ \vdots \\0\end{array} \right)+ \cdots +
	                     \lambda_{n-d} \left( \begin{array}{c}  -b_{(n-d)1} \\ -b_{(n-d)2} \\ \vdots \\-b_{(n-d)d} \\ 0 \\  0 \\ \vdots \\1\end{array} \right) \]
	                     with $\lambda_i \in \C$.
                            
                        Fix a point $(0,T_0)$ in the Nash fiber and consider an open neighbourhood $V:=\{(a_{ij}) \in \C^{d(n-d)} \, | \, |a_{ij}|<M \}$
                        where $M$ is a sufficiently big real positive constant. Now for any point $(p,W) \in U \times V$  we have
                        \[ C \sup\left\{ |z_{t+1}|,\ldots, |z_n|\right\} \geq \delta (Y,W):=\sup_{\vec{u} \in W^\perp \setminus \{0\}, \vec{v} \in Y \setminus\{0\}} 
                            \frac{\left| \left< \vec{u}, \vec{v} \right> \right|}{|| \vec{u}|| \,|| \vec{v}||}\]
                        in particular, by setting $\vec{v}= \hat{e}_j$ and $\vec{u}= (-b_{k1},\ldots, -b_{kd},0,\dots,0,1,0,\ldots,0)$  for $j \in \{1,\ldots,t\}$ 
                        and $k \in \{1,\ldots,n-d\}$ we get the inequality           
                        \[ C \sup\left\{ |z_{t+1}|,\ldots, |z_n|\right\} \geq  \frac{\left| \left< \vec{u}, \hat{e}_j \right> \right|}{|| \vec{u}|| \,||\hat{e}_j||}
                        = \frac{|b_{kj}|}{|| \vec{u}||}> \frac{|b_{kj}|}{M'}\]      
                        the last inequality coming from the fact that the $b_{ij}$'s  are bounded since $W$ is in $V$. This implies that for every      
                        $j \in \{1,\ldots,t\}$ and $i \in \{1,\ldots,n-d\}$  we have that $a_{ij} \in \overline{J}$ which finishes the proof.               
         \end{proof}

         As a final comment we would like to point out that the classic construction of the local polar $(P_k(X),0)$ varieties using the Nash modification, or the 
         conormal space (\cite[Chap. 4, Coro 1.3.2 \& Prop 4.1.1]{Te1}) carries over practically word for word to the d-conormal.  \\
         
            Recall that for a germ of reduced and equidimensional complex analytic singularity  $(X,0) \subset (\C^n,0) $ of dimension $d$ and a sufficiently
        general linear space $D$ of dimension $n-d+k-1$ ($k \in \{1,\ldots,d-1\}$)  the polar variety $P_k(X;D) \subset X$ is the closure in $X$ of the critical locus of the 
        linear projection with kernel $D$ 
        \[ \Pi_D: X^0 \to \C^{d-k+1}\]
         It is a reduced analytic variety of dimension $d-k$, with the property that the multiplicity of $(P_k(X;D),0)$ is an analytic invariant of the germ $(X,0)$.\\
         
           Now for any $\ell \in \{d, \ldots, n-1\} $ and $k \in \{1,\ldots,d-1\}$ take the Schubert variety 
               \[c_k(D):=\left\{W \in G(\ell,n) \, | \, \mathrm{dim} \, W \cap D \geq  k+ \ell - d \right\}\]
            and consider the diagram
              \[ \xymatrix{ 
          & C_\ell(X) \subset X \times G(\ell,n)  \ar[dr]_{\gamma} \ar[dl]^{\nu_\ell} &  \\
             X & &G(\ell,n) }\]
             then \begin{enumerate}
     \item $P_k(X;D)=\nu_\ell\left(\gamma^{-1}(c_k(D))\right)$
     \item  The equality \[\mathrm{dim}\,\left( \nu_\ell^{-1}(0) \cap \gamma^{-1}(c_k(D)) \right)= \mathrm{dim}\, \nu_\ell^{-1}(0)-(\ell-d)(n-\ell)-k\] is true if the intersection is not empty.
         \end{enumerate}
         where $(\ell-d)(n-\ell)$ is the dimension of the fiber $\nu_\ell^{-1}(p)$ for any smooth point $p \in X$.

\bibliographystyle{alpha}
\bibliography{bibliothese}

\hspace{1in}

{\scriptsize   \noindent Arturo E. Giles Flores\\
                    Universidad Aut\'onoma de Aguascalientes\\
                    Centro de Ciencias B\'asicas\\
                    Departamento de Matem\'aticas y F\'isica\\
                     arturo.giles@cimat.mx }

\end{document}